\newcommand{\ceiling}[1]{\ensuremath{\lceil #1 \rceil}}
\newcommand{\RR}{\ensuremath{{\mathbb R}}}
\newcommand{\ZZ}{\ensuremath{{\mathbb Z}}}
\renewcommand{\vec}{\overline}
\newcommand{\vt}{\vec\omega}
\newcommand{\va}{\vec\alpha}
\newcommand{\Expect}[2]{\ensuremath{{\mathbb E}_{#1}\left[#2\right]}}
\newcommand{\Prob}[2]{\ensuremath{{\mathbb P}_{#1}\left[#2\right]}}
\newcommand{\Floor}[1]{\left\lfloor #1 \right\rfloor}
\DeclareMathOperator{\vol}{{\bf vol}}
\DeclareMathOperator{\BOX}{{\textsc{Box}}}
\DeclareMathOperator{\annuli}{{\textsc{Annuli}}}
\DeclareMathOperator{\ball}{{\textsc{Ball}}}
\newtheorem{theorem}{Theorem}
\newtheorem{proposition}{Proposition}
\newtheorem{lemma}{Lemma}
\newtheorem{corollary}{Corollary}
\title{Sets of integers that do not contain \\ long arithmetic progressions}
\author{Kevin O'Bryant}
\date{\today}
\begin{document}
\maketitle

\begin{abstract}
In 1946, Behrend gave a construction of dense finite sets of integers that do not contain a 3-term arithmetic progression (AP). In 1961, Rankin generalized Behrend's construction to sets avoiding $k$-term APs. In 2008, Elkin refined Behrend's 3-term construction, and later in 2008, Green \& Wolf found a distinct approach (albeit morally similar) that is technically more straightforward. This work combines Elkin's refinement and Rankin's generalization in the Green \& Wolf framework. A curious aspect of the construction is that we induct through sets that do not contain a long polynomial progression in order to construct a set without a long AP. The bounds for $r_k(N)$, the largest size of a subset of $\{1,2,\dots,N\}$ that does not contain a $k$ element AP, are (where $\log=\log_2$, for sufficiently large $N$, with $n=\ceiling{\log k}$)
    \[r_3(N) \geq  N \left(\tfrac{\sqrt{360}}{e \pi^{3/2}}-\epsilon\right) \frac{\sqrt[4]{2\log N}}{4^{\sqrt{2 \log N}}},\]
    \[r_k(N) \gg  N \, 2^{-n 2^{(n-1)/2} \sqrt[n]{\log N}+\frac{1}{2n}\log\log N}.\]
The improvement over earlier work is in the simplification of the construction, the explicitness of the bound for $r_3$, and in the $\log\log$ term for general $k$.
\end{abstract}

We denote by $r_k(N)$ the maximum possible size of a subset of $\{1,2,\dots,N\}$ that does not contain $k$ numbers in arithmetic progression. Behrend~\cite{Behrend} proved that
    \[\frac{r_3(N)}{N} \geq C \frac{1}{2^{2\sqrt 2(1+\epsilon) \sqrt{\log N}}},\]
where $\log$ is the base-2 logarithm and each occurrence of $C$ is a new positive constant. Sixty years later, Elkin~\cite{Elkin} strengthened this to show that there are arbitrarily large $N$ satisfying
    \[ \frac{r_3(N)}{N}\geq  C \frac{(\log N)^{1/4}}{2^{2\sqrt 2 \sqrt{\log N}}}, \]
and shortly afterwards Green \& Wolf~\cite{GreenWolf} arrived at the same bound by a different method. For $k\geq 1+2^{n-1}$, Rankin~\cite{Rankin} proved that for each $\epsilon>0$, if $N$ is sufficiently large then
    \[ \frac{r_k(N)}{N} \geq C \frac{1}{2^{{n\,2^{(n-1)/2} \,(1+\epsilon)\,\sqrt[n]{\log N}}}},\]
where $n=\ceiling{\log k}$. This was subsequently rediscovered in a simpler, but less precise, form by {\L}aba \& Lacey~\cite{LabaLacey}. Together with the obvious $r_k(N)\leq r_{k+1}(N)$, these are asymptotically the thickest known constructions. The primary interest in the current work is the following corollary of our main theorem.
\begin{corollary}\label{cor}
Set $n=\ceiling{\log k}$. There exists a positive constant $C$ such that for all $N\geq 1$
    \[\frac{r_k(N)}{N} \geq C \frac{\sqrt[2n]{\log N}}{2^{n\,2^{(n-1)/2} \sqrt[n]{\log N}}}.\]
For every $\epsilon>0$, if $N$ is sufficiently large then
    \[\frac{r_3(N)}{N} \geq \left(\frac{\sqrt{360}}{e \pi^{3/2}}-\epsilon\right) \frac{\sqrt[4]{2\log N}}{2^{2\sqrt{2 \log N}}}.\]
\end{corollary}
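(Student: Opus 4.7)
The plan is to derive both inequalities from the paper's main theorem by a calculus optimization over the dimension parameter $d$ of the underlying base-$(2M+1)$ construction, with $(2M+1)^d$ close to $N$. The main theorem should supply, schematically, $r_k(N)\gg N\cdot p(d)\cdot 2^{-q(d,M,n)}$, where $p(d)$ is a polynomial prefactor arising from the sharp-shell (Elkin / Green--Wolf) argument applied through \BOX, \annuli, \ball{} and \Round, and $q$ is an exponential penalty for restricting to a thin polynomial shell of degree $2^{n-1}$ inside $[-M,M]^d$.

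For the general $k$ case, I will substitute $M=\tfrac{1}{2}(N^{1/d}-1)$ into the bound and view the right-hand side as a function of $d$ alone. The dominant exponent balances when $d\asymp 2^{(n-1)/2}(\log N)^{1/n}$, producing the leading term $2^{-n\,2^{(n-1)/2}\sqrt[n]{\log N}}$. The polynomial prefactor $p(d)$, in the Rankin bound absorbed into the $(1+\epsilon)$ and in the {\L}aba--Lacey version simply discarded, should be a $d^{1/2}$ coming from the Green--Wolf Gaussian-width improvement; at the optimal $d$ this becomes $(\log N)^{1/(2n)}=\sqrt[2n]{\log N}$, which is precisely the gain over earlier work. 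Once the asymptotic inequality is established for large $N$, shrinking the constant $C$ absorbs all bounded $N$, yielding the "for all $N\geq 1$" form.

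The $r_3$ case is the same optimization with $n=2$ (so $d\asymp \sqrt{2\log N}$ and $M\asymp 2^{\sqrt{(\log N)/2}}$), but every constant must now be chased. I expect $\sqrt{360}/(e\pi^{3/2})$ to emerge as the product of three ingredients: (i) the unit-ball volume $\pi^{d/2}/\Gamma(d/2+1)$ processed via the sharp Stirling asymptotic $\Gamma(d/2+1)\sim\sqrt{\pi d}\,(d/2e)^{d/2}$, (ii) the Gaussian normalization $(2\pi)^{-1/2}$ introduced by the Green--Wolf convolution that replaces Elkin's thin annulus, and (iii) a factor $\sqrt{3}$ coming from the optimal shell radius $M\sqrt{d/3}$ inside $[-M,M]^d$. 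Reassembling these and collecting powers of $2$, $\pi$, $e$, and $3$ should consolidate into the stated coefficient (noting $\sqrt{360}=6\sqrt{10}$).

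The main obstacle is precisely this constant-tracking in the $r_3$ bound: the answer depends on the second-order term in Stirling, on the exact normalization of the Green--Wolf convolution, and on the loss incurred by \Round{} when the continuous annulus is pulled back to $\{1,\dots,N\}$. The general-$k$ inequality, once the main theorem is in hand, is a routine two-variable optimization followed by the standard base-change bookkeeping used by Rankin.
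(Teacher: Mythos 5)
There is a genuine gap, on both halves. For the general-$k$ inequality, the paper's derivation is immediate: since $r_k=r_{k,1}$ and $\ceiling{\log(k/1)}=\ceiling{\log k}$, Theorem~\ref{thm:main} with $D=1$ gives exactly the claimed bound for large $N$, and shrinking $C$ covers all $N\geq 1$ because $r_k(N)\geq 1$. Your plan instead treats the main theorem as if it still had free parameters $d$ and $M$ to optimize, with an underlying base-$(2M+1)$ digit embedding; but the theorem as stated has no such parameters (the optimization over $d$ happens inside its proof), and the paper's construction is not the Behrend/Rankin lattice-point one at all --- it is the torus construction $n\mapsto n\,\vt+\va \bmod \vec 1$ with membership in $\annuli(A_0,N_0,D,\delta)$, with no base-change, no $M$, and no $\Round$-type pull-back of a continuous annulus to $[N]$. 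So as a derivation of the corollary from the stated theorem, your first part does not parse, and as a re-proof of the theorem it is a different construction whose details you have not supplied.

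The more serious gap is the $r_3$ bound: a theorem with an unspecified constant $C$ cannot yield the explicit constant $\sqrt{360}/(e\pi^{3/2})$, and this is precisely why the paper proves Proposition~\ref{prop:base case} with explicit coefficients; the corollary's second inequality is just that proposition at $k=3$, $D=1$ (note $2^{\sqrt{8\log N}}=2^{2\sqrt{2\log N}}$ and $\sqrt[4]{2\log N}$ match). Your proposal acknowledges that the constant-chase is ``the main obstacle'' and does not carry it out, so the second inequality is simply not established. Moreover, your ingredient list does not match where the constant actually comes from: in the paper it arises from the single-annulus volume bound $(\sqrt{2/\pi}-o(1))\,2^{-d}\delta$ (which requires the Berry--Esseen theorem plus a convolution/pigeonhole step, because the CLT cannot be applied directly to an interval of width $\delta\to 0$), the variance $\sigma_D^2=2^{-4D}d/180$ of $\|\vec x\|_2^2$ for uniform points of $\BOX_D$ (the source of the $360$), Stirling for $\Gamma(d/2)^{2/d}\sim d/2e$, and the choice of $\delta$ balancing $N\vol\ball(\sqrt{F\sigma_D\delta})$ against the $2^{D+1}N^2$ count of progression types from Lemma~\ref{lem:fewtypes}, followed by $d=\floor{\sqrt{2\log N}}$ and a monotonicity argument controlling the floor-induced error $\epsilon(N)$. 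Your proposed factors (an optimal shell radius $M\sqrt{d/3}$, a discretization loss from rounding) have no counterpart in this computation, so even as a sketch of an alternative route the arithmetic leading to $\sqrt{360}/(e\pi^{3/2})$ is unsupported.
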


Szemer\'{e}di's Theorem states that $r_k(N)=o(N)$, and the task of getting quantitative upper bounds on $r_k(N)$ has been mathematically fruitful. The currently-best upper bounds on $r_k(N)$ are due to Bourgain~\cite{Bourgain}, Green \& Tao~\cite{r4paper}, and Gowers~\cite{Gowers}, respectively:
    \begin{align*}
    r_3(N) &\ll N (\log \log N)^2(\log N)^{-2/3};\\
    r_4(N) &\ll N e^{-C\sqrt{\log\log N}}; \\
    r_k(N) &\ll N (\log \log N)^{-2^{-2^{k+9}}}.
    \end{align*}

It is natural to speculate as to whether the upper or lower bound on $r_k(N)$ is closer to the truth. Certainly, the upper bounds have seen a steady stream of substantive improvements, while the main term of the lower bound has remained unchanged for 50 years. The reader is directed to a discussion on Gil Kalai's blog~\cite{KalaiBlog} for some relevant speculative remarks of Gowers and of Kalai's. 

To prove our result we need to induct through sets that do not contain more elaborate types of progressions. A $k$-term $D$-progression is a sequence of the form
    \[Q(1),Q(2),\dots,Q(k)\]
where $Q$ is a nonconstant polynomial with degree at most $D$. For example, $1$-progressions are proper arithmetic progressions. The sequences $2,1,2,5,10$ and $1,2,4,7,11$ are 5-term $2$-progressions arising from the polynomials $(j-2)^2+1$ and $(j^2-j+2)/2$. In particular, a progression of integers may contain the same number in different places, and may arise from a polynomial whose coefficients are not integers. Also, note that the class of $k$-term $D$-progressions is invariant under both translation and dilation. Let \(r_{k,D}(N)\) denote the maximum possible size of a subset of $[1,N]\cap\ZZ$ that does not contain any $k$-term $D$-progressions.

\begin{theorem}\label{thm:main}
 Fix positive integers $k,D$ and set $n=\ceiling{\log(k/D)}$. There exists a positive constant $C$ such that for every $N$
    \[\frac{r_{k,D}(N)}{N} \geq C \cdot \frac{\sqrt[2n]{\log N}}{2^{n 2^{(n-1)/2} D^{(n-1)/n}\sqrt[n]{\log N}}}.\]
\end{theorem}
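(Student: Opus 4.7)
I would prove the bound by induction on $n=\ceiling{\log(k/D)}$, implementing the ``polynomial-progression induction'' flagged in the introduction: the bound for the pair $(k,D)$ is derived from the bound for $(k,2D)$, which carries parameter $n-1$.

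\textbf{Inductive step.}
Suppose by induction that $B\subset[1,M_0]\cap\ZZ$ has near-optimal density and avoids $k$-term $2D$-progressions. Choose parameters $d$ and $M$ (to be optimized) with $dM^2$ comparable to $M_0$, and set
\[
    A \;:=\; \phi\bigl(\{\va\in\ZZ^d\cap[0,M)^d:\lvert\va\rvert^2\in B\}\bigr),
    \qquad \phi(\va) \;:=\; \sum_{i=1}^d \alpha_i(cM)^{i-1},
\]
with Freiman base $c=c(k,D)>2^{D+1}$, so that any $(D{+}1)$-term finite-difference relation among the $\phi$-values lifts to the corresponding relation among the $\va$-vectors. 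A standard Gaussian lattice-point count gives $\lvert A\rvert\gtrsim\lvert B\rvert\cdot M^{d-2}/\sqrt d$, and $A\subset[1,(cM)^d]$. If $A$ contained a $k$-term $D$-progression $\phi(\va^{(1)}),\dots,\phi(\va^{(k)})$, applying $\Delta^{D+1}$ in the image and lifting would produce a vector polynomial $Q(t)=(Q_1(t),\dots,Q_d(t))$ of degree $\leq D$ with $Q(t)\in\ZZ^d\cap[0,M)^d$ and $\lvert Q(t)\rvert^2\in B$ for every $t\in\{1,\dots,k\}$. Now $P(t):=\sum_i Q_i(t)^2$ is a polynomial of degree $\leq 2D$ taking values in $B$ at $k$ integer points, so unless $P$ is constant it is a nontrivial $k$-term $2D$-progression in $B$, which is forbidden. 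Thus $P(t)\equiv R^2$. Writing $Q_i(t)=\sum_{j=0}^D c_{i,j}t^j$, equating the coefficient of $t^{2D}$ in $P$ to zero gives $\sum_i c_{i,D}^2=0$, so every $c_{i,D}=0$; iterating with $Q$ of successively smaller degree forces $Q$ to be constant, contradicting non-triviality.

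\textbf{Base case and principal obstacles.}
For the base case $n=1$ (with $D+1\leq k\leq 2D$), the polynomial $P(t)$ has degree $2D\geq k$ and the inductive mechanism breaks down; here I would invoke the Elkin-refined Green--Wolf thin-shell construction directly, combining a Markov--Bernstein polynomial inequality on $P(t)$ within a spherical shell of width $\epsilon$ with the integrality constraint $D!\,c_{i,D}\in\ZZ$ to kill the leading coefficients once $\epsilon$ is sufficiently small (depending on $D$), and then iterating down in degree. Chaining the inductive step through $n-1$ iterations on top of this base, and optimizing dimensions $d_i$ and sidelengths $M_i$ so that each level operates near its critical scale, propagates the Elkin refinement at the outermost level to the $\sqrt[2n]{\log N}$ factor in the claimed bound. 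The principal obstacles I foresee are (i) verifying that $\phi$ genuinely lifts $D$-progressions (not only APs) while keeping $c$ a mild function of $k$ and $D$, and (ii) the simultaneous multi-scale parameter optimization across the $n$ levels, which is delicate because at each level the relevant dimension depends on the logarithm of the next level's length scale, and the Elkin savings must not be absorbed into slack as the recursion unwinds.
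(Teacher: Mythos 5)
Your overall architecture is the paper's: induct on $n$, reducing the pair $(k,D)$ to $(k,2D)$ by placing radii indexed by a $2D$-progression-free set on concentric spheres, and inject the Elkin refinement at the bottom of the recursion. The implementation differs (you use the Behrend--Rankin lattice/radix embedding with exact spheres, while the paper uses the Green--Wolf random map $n\mapsto n\vt+\va$ into thin concentric annuli in the torus plus a deletion step), and that difference is where your plan has two genuine problems. First, the base case: you stop the recursion at $n=1$, i.e.\ $D+1\le k\le 2D$, and this cannot work. In that range the statement itself can fail (for $k\le D+1$ any $k$ distinct integers interpolate to a nonconstant polynomial of degree $\le D$, so $r_{k,D}(N)\le k-1$, while the claimed bound grows like $\sqrt{\log N}$), and the thin-shell mechanism is powerless: $\|\vec P(t)\|_2^2$ has degree $2D$, so $k\le 2D$ sample values in a shell impose fewer constraints than coefficients and place no bound at all on the leading coefficient --- no Markov--Bernstein inequality plus integrality can rescue this. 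The recursion must bottom out while $k\ge 2D'+1$ (parameter $n=2$), which is exactly the paper's Proposition~\ref{prop:base case}; there the quantitative Vandermonde bound (Lemma~\ref{lem:RL1}) controls the lead coefficient by $O(\sqrt{\delta})$, and a Berry--Esseen/pigeonhole argument (Lemma~\ref{lem:annuli}) supplies the $(\log)^{1/4}$ gain that you then propagate.

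Second, constants in the exponent are the content of the theorem, and your inductive step as specified does not deliver them. With radix base $cM$ and $c>2^{D+1}$ the density loss per level is $c^{-d}\ge 2^{-d(D+1)}$; since every level's dimension $d_i$ is of order $(\log N)^{1/n}$, an extra $\exp(-d_i)$ per level is of the same exponential order as the main term, so ``$c$ a mild function of $k,D$'' is not enough --- you would end up with a strictly larger constant than $n2^{(n-1)/2}D^{(n-1)/n}$ (already for $D=1$, base $4P$ instead of $2P$ ruins Behrend's $2\sqrt2$). One can take $c=2^{D}$ by splitting the positive and negative coefficients of $\Delta^{D+1}$ across the two sides of the equation before comparing digits, mirroring the paper's box of side $2^{-D}$ in Lemma~\ref{lem:RL2}; without this your construction proves a weaker theorem. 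Relatedly, your ``standard Gaussian lattice-point count'' $|A|\gtrsim|B|M^{d-2}/\sqrt d$ is not standard for individual radii intersected with a box --- it is precisely Rankin's number-theoretic lemma that the paper's random $\vt,\va$ is designed to avoid; it can be repaired cheaply by averaging over the offset used to recenter $B$ into the bulk window (of width $\asymp M^2\sqrt d$, not $dM^2$) and pigeonholing, but as written it is an unproved step.
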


To explain what is new and interesting in the current work, we begin by summarizing the earlier constructions. Behrend's construction~\cite{Behrend}, while no longer the numerically best or most general, remains the most elegant. His initial observation is that a sphere cannot contain a 3-term arithmetic progression simply because a line and a sphere cannot intersect more than twice. Let $S$ be a set of points in $\ZZ^d$ all lying on one sphere and having all coordinates positive and smaller than $P$, and then let $A$ be the image of $S$ under the map $\varphi:\langle x_1,\dots,x_d\rangle \mapsto \sum_{i=1}^d x_i (2P)^{i-1}$. Because  $0<x_i<P$, addition of two elements of $A$ will not involve any carrying. This $\varphi$ is therefore a Freiman 2-isomorphism between $S$ and $A$; that is, $\vec x_1+\vec x_2=\vec x_3+\vec x_4$ if and only if $\varphi(\vec x_1)+\varphi(\vec x_2)=\varphi(\vec x_3)+\varphi(\vec x_4)$. Since three integers $a<b<c$ are in arithmetic progression if and only if $a+c=b+b$, this proves that $A$ is free of 3-term arithmetic progressions. The only remaining work is to show that there exists a suitably large $S$, which Behrend did with the pigeonhole principle, and to optimize $P$ and $d$ in terms of $N$.

Rankin combined three observations. His first observation was that Behrend's use of the pigeonhole principle could be replaced with a number-theoretic result on the number of representations of a huge number as a sum of a large number of squares. The second is that a degree $D$ polynomial cannot intersect a sphere in more than $2D$ points, and so Behrend's argument actually gives a lower bound on $r_{2D+1,D}$. The third is that one can use a set that does not contain $k$-term $2D$-progressions to build $S$ as a union of concentric spheres with skillfully chosen radii. The corresponding set $A$ (after mapping $S$ as per Behrend, but with the radix $2P$ replaced by something much larger) will necessarily be free of $k$-term $D$-progressions. This provided for an inductive bound. For example, $r_9=r_{9,1}$ is bounded in terms of $r_{9,2}$, which is bounded in terms of $r_{9,4}$, which is then bounded using Rankin's generalized Behrend argument.

Elkin~\cite{Elkin} improved Behrend's 3-term construction in two ways. First, he used the central limit theorem (and the pigeonhole principle) to guarantee the existence of a large $S$; and second, he considered lattice points in a very thin annulus. Using an annulus instead of a sphere leads to a set $S$ that is substantively larger but, unfortunately, does have 3-term arithmetic progressions. After removing a small number of points to eliminate the progressions, Elkin proceeded along the same line as Behrend, needing to optimize $d$, $P$, and also the thickness of the annulus.

Green \& Wolf~\cite{GreenWolf} recast Elkin's argument in a way that avoids counting lattice points. In the $d$-dimensional torus, they take $S$ to be the intersection of a small box and an annulus. Using random elements $\vt,\va$ of the torus, they consider the map $\varphi: n \mapsto n\,\vt+\va$. Letting $A:=\{a\colon \varphi(a)\in S\}$, this map is a Freiman 2-isomorphism between $A$ and $\varphi(A)$. The randomness allowed them to easily count the size of $A$ and the number of progressions in $A$ that need to be removed.

In the current work we recast Rankin's argument using the lessons of Elkin and Green \& Wolf. We avoid Rankin's sum-of-squares number theory lemma by taking random $\vt,\va$ (we still need the pigeonhole principle, however). We find the right generalization of ``an arithmetic progression in a thin annulus has a small difference'' to $D$-progressions, and thereby generalize Elkin's result to improve Rankin's bound on $r_{2D+1,D}$. Finally, by taking concentric annuli, we smooth out Rankin's inductive step. We note also that previous work has sometimes suffered\footnote{Some would say benefitted.} from a cavalier treatment of error terms. For example, Elkin's ``arbitrarily large $N$'' and Rankin's ``$1+\epsilon$'' term can be eliminated with a little care. We have taken the opposite tack here, in places working for coefficients that are not important in the final analysis, but which we consider to be of interest. In particular, the refinement for $r_3$ stated in Corollary~\ref{cor} constitutes about 15\% (by volume) of this work.

\section{Notation}

Throughout, $\log$ and $\exp$ refer to the base-2 logarithm and exponential. Vectors are all given overlines, as in $\vec x$, and all have dimension $d$.

The parameters $N$ and $d$ tend to infinity together, with $N$ much larger than $d$, and all little-oh notation is with respect to $N$ and $d$. The parameter $d$ is a dimension, and must be an integer, while $N$ need not be an integer. The other fundamental parameters, the integers $k$ and $D$, are held constant.
    
We define the difference operator $\Delta$ to be the map taking a finite sequence $(a_i)_{i=1}^k$ to the finite sequence $(a_{v+1}-a_v)_{v=1}^{k-1}$. The formula for repeated differencing is then
            \[\Delta^n (a_i) =\left( \sum_{i=0}^{n} \binom{n}{i}(-1)^{i} a_{i+v}\right)_{v=1}^{k-n}.\]
We note that a nonconstant sequence $(a_i)$ with at least $D+1$ terms is a $D$-progression if and only if $\Delta^{D+1}(a_i)$ is a sequence of zeros. If $a_i=p(i)$, with $p$ a polynomial with degree $D$ and lead term $p_D$, then $\Delta^D (a_i) = (D! p_D)$, a constant sequence. Note also that $\Delta$ is a linear operator. Finally, we will make repeated use of the fact, provable by induction for $1\leq n \leq k$, that
            \[|\Delta^n (a_i)| \leq  2^{n-1} \left( \max_i a_i - \min_i a_i\right).\]
    
A $k$-term type-$(n,a,b)$ progression is a nonconstant sequence $a_1,a_2,\dots,a_k$ with $k\geq n$, $a_1=a$, and $n$-th differences $\Delta^n(a_i)$ the constant nonzero sequence $(b)$. For example, if $p$ is a degree $n$ polynomial (with lead term $p_n\not=0$) and $k\geq n$, then $p(1),\dots,p(k)$ is a type $(n,p(1),n!p_n)$ progression.
    
The open interval $(a-b,a+b)$ of real numbers is denoted $a\pm b$. The interval $[1,N]\cap\ZZ$ of natural numbers is denoted $[N]$. For positive integers $i$, the box $(\pm 2^{-i-1})^d$, which has Lebesgue measure $2^{-id}$, is denoted $\BOX_i$. We define $\BOX_0=[-1/2,1/2)^d$, and define $\vec x \bmod \vec 1$ to be the unique element $\vec y$ of $\BOX_0$ with $\vec x - \vec y \in \ZZ^d$.
    
A point $\vec x=\langle X_1,\dots, X_d \rangle$ chosen uniformly from $\BOX_D$ has components $X_i$ independent and uniformly distributed in $(-2^{-D-1},2^{-D-1})$. Therefore, $\|\vec x\|_2^2 = \sum_{i=1}^d X_i^2$ is the sum of $d$ iidrvs, and is therefore normally distributed as $d\to\infty$. Further $\|\vec x \|_2^2$ has mean $\mu_D:=2^{-2D} d/12$ and variance $\sigma_D^2:=2^{-4D}d/180$.
    
For any set $A \subseteq[n]$, positive integer $D$, and sufficiently small positive real number $\delta$, we define $\annuli(A ,n ,D,\delta)$ in the following manner:
            \[\annuli(A ,n ,D,\delta) 
                :=\left\{\vec x \in \BOX_D \colon  
                    \frac{\|\vec x\|_2^2-\mu_D}{\sigma_D} \in \bigcup_{a\in A} \left(z-\frac{a -1}{n } \pm \delta\right) \right\},\]
where $z\in \mu_D\pm\sigma_D$ is chosen to maximize the volume of $\annuli(A,n,D,\delta)$. Geometrically, $\annuli(A ,n ,D,\delta)$ is the union of $|A|$ spherical shells, intersected with $\BOX_D$.

\section{Lemmas}

The following lemma is best-possible for $k=2D+1$. Improving the bound for larger $k$ comes down to the following problem: if $Q$ has degree $D$ and all of $|Q(1)|,\dots,|Q(k)|$ are less than 1, then how big can the leading coefficient of $Q$ be?

\begin{lemma}[Sphere-ish polynomials have small-ish lead coefficients] \label{lem:RL1} Let $\delta,r$ be real numbers with $0\leq \delta \leq r$, and let $k,D$ be integers with $D\geq 1, k\geq 2D+1$. If $\vec P(j)$ is a polynomial with degree $D$, and $r-\delta \leq \| \vec P(j) \|_2^2 \leq r+\delta$ for $j\in[k]$, then the lead coefficient of $\vec P$ has norm at most $2^D \left.{(2D)!}\right.^{-1/2} \, \sqrt{\delta}$.
\end{lemma}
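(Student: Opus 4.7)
The key idea is that although $\vec P$ is vector-valued, the squared-norm $f(j) := \|\vec P(j)\|_2^2 = \sum_i P_i(j)^2$ is a real-valued polynomial in $j$, and its degree is at most twice the degree of $\vec P$. The hypothesis then says $f$ is very flat on $2D+1$ integer points, which by finite differencing forces its lead coefficient to be tiny; that lead coefficient is exactly $\|\vec c_D\|_2^2$ where $\vec c_D$ is the lead coefficient of $\vec P$.

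\textbf{Step 1.} Write $\vec P(j) = \vec c_D j^D + \vec c_{D-1} j^{D-1} + \cdots + \vec c_0$ with $\vec c_D \neq \vec 0$. Form $f(j) := \|\vec P(j)\|_2^2$; because each component $P_i(j)$ is a real polynomial in $j$ of degree at most $D$, the function $f$ is a real polynomial in $j$ of degree at most $2D$, and the coefficient of $j^{2D}$ is $\|\vec c_D\|_2^2 > 0$. So in fact $\deg f = 2D$ exactly.

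\textbf{Step 2.} Apply the identity from the Notation section: for any polynomial $g$ of degree $m$ with lead coefficient $g_m$, the $m$-th difference $\Delta^m g$ is the constant sequence $(m!\, g_m)$. With $g = f$ and $m = 2D$ we obtain
\[\Delta^{2D} f = \bigl((2D)!\,\|\vec c_D\|_2^2\bigr).\]
This computation uses only $f(1),\dots,f(2D+1)$, which is legitimate because $k \geq 2D+1$.

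\textbf{Step 3.} Bound $\Delta^{2D} f$ by size. By hypothesis, $f(j) \in [r-\delta, r+\delta]$ for $j \in [k]$, so $\max_{j\in[k]} f(j) - \min_{j\in[k]} f(j) \leq 2\delta$. The general inequality $|\Delta^n (a_i)| \leq 2^{n-1}(\max a_i - \min a_i)$ quoted in the Notation section, applied with $n=2D$ to the sequence $f(1),\dots,f(2D+1)$, yields
\[(2D)!\,\|\vec c_D\|_2^2 \;=\; |\Delta^{2D}f| \;\leq\; 2^{2D-1}\cdot 2\delta \;=\; 2^{2D}\delta.\]
Solving gives $\|\vec c_D\|_2 \leq 2^D (2D)!^{-1/2}\sqrt{\delta}$, as claimed.

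\textbf{Main obstacle.} There is no real obstacle once one recognizes that passing to $\|\vec P(j)\|_2^2$ converts the vector-valued hypothesis into a one-dimensional statement about a polynomial of degree $2D$ whose range on $2D+1$ consecutive integers is pinned inside an interval of length $2\delta$; after that, finite-difference bookkeeping is entirely routine. The only thing to watch is that the degree of $f$ is exactly $2D$ (not less), which is guaranteed because $\vec c_D \neq \vec 0$ and squared-norm is positive-definite on $\RR^d$.
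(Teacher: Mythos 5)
Your proof is correct, and it reaches the paper's bound by a genuinely different (though closely related) route. Both arguments begin identically: pass to the scalar polynomial $f(j)=\|\vec P(j)\|_2^2$, whose coefficient of $j^{2D}$ is exactly $\|\vec P_D\|_2^2$, and use only the $2D+1$ values $f(1),\dots,f(2D+1)$, which is where $k\geq 2D+1$ enters. The paper then extracts that coefficient by solving the interpolation system with Cramer's rule: the coefficient matrix is Vandermonde, the relevant minors are again Vandermonde, and the cofactor expansion along the last column gives $q_{2D}\leq \frac{\sum_{j}|M_{j,2D+1}|}{|\det M|}\,\delta=\frac{2^{2D}}{(2D)!}\,\delta$. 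You instead invoke the two facts already recorded in the Notation section, namely that $\Delta^{2D}$ applied to a polynomial sequence returns $(2D)!$ times the coefficient of $j^{2D}$, and that $|\Delta^{n}(a_i)|\leq 2^{n-1}(\max_i a_i-\min_i a_i)$, to get $(2D)!\,\|\vec P_D\|_2^2\leq 2^{2D}\delta$ in one line. The two computations are secretly the same: the ratios $M_{j,2D+1}/\det M$ equal $\binom{2D}{j-1}/(2D)!$, i.e.\ precisely the finite-difference weights, which is why the constants coincide and why the lemma is sharp in the same regime. What your route buys is brevity and economy (it uses only tools the paper has already set up, no determinants); what the paper's route buys is an explicit interpolation framework that would adapt, with a different constant, to sample points other than consecutive integers. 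Two cosmetic remarks: you do not actually need $\deg f=2D$ exactly (nor the hypothesis $\delta\leq r$) --- it suffices that the coefficient of $j^{2D}$ is $\|\vec P_D\|_2^2$, since $\Delta^{2D}f$ equals $(2D)!$ times that coefficient in any case, and if $\vec P_D=\vec 0$ the conclusion is trivial; and when you apply the difference identity you should note, as you implicitly do, that it is being applied to the first $2D+1$ terms of the sequence, which exist because $k\geq 2D+1$.
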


\begin{proof}
In this paragraph we summarize the proof; in subsequent paragraphs we provide the details. $Q(j):=\|\vec P(j)\|_2^2 -r$ is a degree $2D$ polynomial of $j$, and each of the $2D+1$ real numbers $Q(1),\dots,Q(2D+1)$ are close to zero. If they were all exactly zero, then $Q$ would have more zeros than its degree and so would necessarily be identically zero. Just having that many values close to 0, however, is already enough to guarantee that the lead coefficient of $Q$ is small.

Let $\vec P(j) =\vec P_{0} +\vec P_{1} j + \dots+\vec P_{D} j^D$. We work with the degree $2D$ polynomial \[Q(j):= \|\vec P(j)\|_2^2 -r=\sum_{n=0}^{2D} q_n j^n,\] and note in particular that $q_{2D} =\|\vec P_{D}\|_2^2$. As $0\leq \delta\leq r$, we conclude that $|Q(j)| \leq \delta$.

Set $\vec q,\vec Q$ to be the column vectors $\langle q_0,q_1,\dots,q_{2D}\rangle^T,\langle Q(1),\dots,Q(2D+1)\rangle^T$, respectively. Let $M$ be the $(2D+1)\times(2D+1)$ matrix whose $(i,j)$-component is $i^{j-1}$. We have the system of equations
    \[ M \, \vec q = \vec Q,\]
which is nonsingular because $M$ is a Vandermonde matrix. By Cramer's rule, the cofactor expansion of a determinant along the last column, and the triangle inequality,
    \begin{equation*}
    q_{2D} =\frac{\det(M')}{\det(M)}
        = \frac{1}{\det(M)} \sum_{j=1}^{2D+1} Q(j) (-1)^{j+1} M_{j,2D+1}
        \leq \frac{\sum_{j=1}^{2D+1} |M_{j,2D+1}|}{|\det(M)|} \delta.
    \end{equation*}
By the formula for the determinant of a Vandermonde matrix (the relevant minors of $M$ are also Vandermonde matrices), we find that
    \begin{equation*}
    \|\vec P_{D}\|_2^2=q_{2D} \leq \frac{\sum_{j=1}^{2D+1} |M_{j,2D+1}|}{|\det(M)|} \,\delta= \frac{2^{2D}}{(2D)!}\,\delta,
    \end{equation*}
completing the proof.
\end{proof}

\begin{lemma}[Tight modular progressions are also non-modular progressions] \label{lem:RL2} Suppose that $p(j)$ is a polynomial with degree $D$, with $D$-th coefficient $p_D$, and set $\vec x_{j} := \vt\, p(j)+\va \mod \vec1$. If $\vec x_{1},\vec x_2,\dots, \vec x_k $ are in $\BOX_D$ and $k  \geq D+2$, then there is a vector polynomial $\vec P(j) = \sum_{i=0}^D \vec P_i j^i$ with $\vec P(j)=\vec x_j$ for $j\in[k]$, and $D!\vec P_D =  \vt \,D! p_D \bmod \vec 1$.
\end{lemma}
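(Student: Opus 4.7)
The plan is to lift the modular sequence $(\vec x_j)$ to an honest polynomial interpolant on $[k]$ and then read off the leading coefficient modulo $\vec 1$. Write the modular reduction as $\vec x_j = \vt\, p(j) + \va + \vec n_j$ with integer correction vectors $\vec n_j \in \ZZ^d$. If I can show that $(\vec n_j)_{j=1}^k$ itself agrees on $[k]$ with a polynomial of degree at most $D$, then $\vec P(j) := \vt\, p(j) + \va + \vec n_j$ is the desired interpolant, and the leading-coefficient statement will fall out of comparing $D$-th differences.

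The crux is to prove $\Delta^{D+1}(\vec x_j) = \vec 0$. Because $\vt\, p(j) + \va$ is a vector polynomial in $j$ of degree $D$, its $(D+1)$-th difference vanishes, so by linearity $\Delta^{D+1}(\vec x_j) = \Delta^{D+1}(\vec n_j)$, which is integer-valued coordinatewise. On the other hand, $\vec x_j \in \BOX_D$ means each coordinate lies in an interval of length strictly less than $2^{-D}$, so the bound $|\Delta^n(a_i)| \leq 2^{n-1}(\max a_i - \min a_i)$ from the Notation section gives, coordinatewise, $|\Delta^{D+1}(\vec x_j)_v| < 2^D \cdot 2^{-D} = 1$. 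An integer of absolute value strictly less than $1$ is zero, so $\Delta^{D+1}(\vec x_j) = \vec 0$ as claimed. (The hypothesis $k \geq D+2$ is exactly what is needed for this $(D+1)$-th difference to be nonempty.)

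From the vanishing of $\Delta^{D+1}$, the identity $\Delta^{D+1}(\vec x)_v = \vec 0$ expresses $\vec x_{v+D+1}$ as an explicit affine combination of its $D+1$ predecessors, so the entire sequence $(\vec x_j)_{j=1}^k$ is determined by $\vec x_1,\ldots,\vec x_{D+1}$ and therefore coincides with the classical degree-$\leq D$ polynomial interpolant $\vec P(j) = \sum_{i=0}^D \vec P_i j^i$ through those points.

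For the leading coefficient, compute $\Delta^D$ in two ways. Since $\deg \vec P \leq D$, $\Delta^D(\vec P(j))$ is the constant sequence $(D!\,\vec P_D)$. By linearity applied to the decomposition above, $\Delta^D(\vec x_j) = (D!\,\vt\, p_D) + \Delta^D(\vec n_j)$, with the second summand in $\ZZ^d$. Equating the two expressions forces $\Delta^D(\vec n_j)$ to be a constant integer vector, and so $D!\,\vec P_D \equiv D!\,\vt\, p_D \pmod{\vec 1}$, as required. I do not foresee a real obstacle here; the only subtle point is that the $\BOX_D$ hypothesis yields a \emph{strict} inequality in the doubling-factor bound, which is precisely what forces the integer $(D+1)$-th difference to vanish rather than merely be small.
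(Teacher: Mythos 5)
Your argument is essentially the paper's: you show that the $(D+1)$-th differences of $(\vec x_j)$ are integer vectors (your decomposition $\vec x_j=\vt\,p(j)+\va+\vec n_j$ with $\vec n_j\in\ZZ^d$ is just the paper's ``$\equiv\vec 0\pmod{\vec 1}$'' phrased additively), bound them coordinatewise strictly below $1$ using the $2^{n-1}$ difference inequality together with the fact that each coordinate of a point of $\BOX_D$ ranges over an open interval of length $2^{-D}$, conclude that they vanish, pass to the degree-$\le D$ interpolant through the first $D+1$ points, and compare $D$-th differences. All of that is correct, including your observation that $k\ge D+2$ is exactly what makes the $(D+1)$-th difference sequence nonempty.

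The one shortfall is your last sentence: you conclude only the congruence $D!\,\vec P_D\equiv \vt\,D!\,p_D\pmod{\vec 1}$, whereas the lemma asserts the exact equality $D!\,\vec P_D=\vt\,D!\,p_D\bmod\vec 1$, where $\bmod\;\vec 1$ denotes the canonical representative in $\BOX_0=[-1/2,1/2)^d$; this stronger form is what the inductive step later uses (together with Lemma~\ref{lem:RL1}) to bound $\|b\,\vt\bmod\vec 1\|_2$. The missing half-step is supplied by the very tool you already used: since $\vec P(j)=\vec x_j\in\BOX_D$ for $j\in[k]$ and $k\ge D+1$, the difference bound with $n=D$ shows every coordinate of $\Delta^D(\vec x_j)=D!\,\vec P_D$ has absolute value strictly less than $2^{D-1}\cdot 2^{-D}=1/2$, so $D!\,\vec P_D$ lies in $\BOX_0$ and is therefore the unique representative of its class, upgrading your congruence to the claimed equality. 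With that one line added, your proof coincides with the paper's.
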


\begin{proof}
Since $p$ has degree $D$, the $(D+1)$-th differences of $p(1),p(2),\dots,p(k )$ are zero, and therefore the $(D+1)$-th differences of $\vec x_1,\vec x_2,\dots, \vec x_k $ are $\vec 0$ modulo $\vec1$, i.e., all of their components are integers. We will show that in fact all of their components are strictly between $-1$ and $1$, and so they must all be 0.

The $(D+1)$-th differences are given by (valid only for $1\leq v \leq k -D-1$)
    \[\Delta^{D+1}(\vec x_i)(v) =\sum_{i=0}^{D+1} \binom{D+1}{i} (-1)^i \vec x_{v+i}.\]
Denote the $i$-th component of $\vec x_{j}$ by $x_j^{(i)}$. As $\vec x_{v+i}\in\BOX_D$, each component of $\vec x_{v+i}$ is in $\left(-2^{-D-1},2^{-D-1}\right)$. Thus, the $h$-th component of $\Delta^{D+1}(\vec x_i)(v)$ satisfies
    \[ \left| \sum_{i=0}^{D+1} \binom{D+1}{i} (-1)^i \vec x_{v+i}^{(h)} \right|
                \leq  \sum_{i=0}^{D+1} \binom{D+1}{i} |\vec x_{v+i}^{(h)}|
                < \sum_{i=0}^{D+1} \binom{D+1}{i} 2^{-(D+1)} = 1,\]
and therefore $\Delta^{D+1}(\vec x_i)=(0)$.

Now,
    \[D!\vec P_D = \Delta^D(\vec P(i)) = \Delta^D(\vec x_{i}) \equiv \vt D! p_D \pmod{\vec 1}.\]
As $\vec P(i) \in \BOX_D$ for $1\leq i \leq k $, the above binomial-coefficient triangle-inequality argument tells us that the components of $\Delta^D(\vec P(i))$ are between $-1/2$ and $1/2$, and so $D!\vec P_D =\left( \vt D! p_D \bmod \vec 1\right)$.
\end{proof}

\begin{lemma}[$\annuli$ has large volume] \label{lem:annuli}
If $d$ is sufficiently large, $A\subseteq[n]$, and $2\delta\leq 1/n$, then the volume of $\annuli(A,n,D,\delta)$ is at least
    \(\displaystyle \frac25 \, 2^{-dD} |A| \delta .\) Provided that $\delta \log d \to 0$, the volume of $\annuli(\{1\},1,D,\delta)$ is at least \mbox{$ (\sqrt{2/\pi}-o(1)) \,2^{-dD}\,\delta $}.
\end{lemma}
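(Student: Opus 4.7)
The plan is to rewrite the volume as a probability and invoke a local central limit theorem for the standardized squared norm. Because $\vec x$ chosen uniformly from $\BOX_D$ has density $2^{dD}$,
\[
\vol(\annuli(A,n,D,\delta)) = 2^{-dD}\cdot\Prob{\vec x}{W\in U_z},\qquad W := \frac{\|\vec x\|_2^2-\mu_D}{\sigma_D},\quad U_z := \bigcup_{a\in A}\left(z-\tfrac{a-1}{n}\pm\delta\right).
\]
The hypothesis $2\delta\leq 1/n$ guarantees that the $|A|$ sub-intervals comprising $U_z$ are pairwise disjoint, so $U_z$ has Lebesgue measure $2|A|\delta$. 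Writing $\|\vec x\|_2^2=\sum_{i=1}^d X_i^2$ as a sum of bounded iid random variables with a density, a local central limit theorem (e.g.\ Gnedenko's) applies: the density $f_W$ of $W$ converges uniformly to the standard normal density $\phi$ as $d\to\infty$.

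For the first claim, I would choose $z$ so that all $|A|$ centers $\{z-(a-1)/n:a\in A\}$ lie in $[-1/2+\delta,\,1/2-\delta]$. This is possible precisely because the centers span an interval of length at most $(n-1)/n \leq 1-2\delta$, which is the content of the hypothesis $2\delta \leq 1/n$. Then $U_z\subseteq[-1/2,1/2]$, on which $\phi\geq\phi(1/2)=e^{-1/8}/\sqrt{2\pi}>0.35$. The local CLT gives
\[
\Prob{}{W\in U_z}\;\geq\;2|A|\delta\,\bigl(\phi(1/2)-o(1)\bigr)\;\geq\;\tfrac{2}{5}|A|\delta
\]
for all sufficiently large $d$, since $2\phi(1/2)>2/5$ with substantial room to spare.

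For the second claim, where $A=\{1\}$ and $n=1$, I would take $z=0$, so $U_0=(-\delta,\delta)$. Using uniform convergence $f_W\to\phi$ together with the hypothesis $\delta\log d\to 0$ (which forces both $\delta\to 0$ and $d\to\infty$),
\[
\Prob{}{W\in(-\delta,\delta)} \;=\; \int_{-\delta}^{\delta} f_W(w)\,dw \;=\; 2\delta\,\phi(0)\,(1-o(1)) \;=\; \bigl(\sqrt{2/\pi}-o(1)\bigr)\,\delta,
\]
which delivers the sharp constant.

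The main obstacle is supplying a quantitative local CLT: the density of each summand $X_i^2$ has an integrable singularity at $0$, so one must verify that $d$-fold convolution produces a density whose sup-distance to $\phi$ tends to $0$. This is where the specific rate $\delta\log d\to 0$ enters the second claim: it is exactly the regime in which both the local CLT error $|f_W-\phi|$ and the linearization error $|\phi(w)-\phi(0)|$ across $(-\delta,\delta)$ can simultaneously be absorbed into the $o(1)$ factor, yielding the constant $\sqrt{2/\pi}$ sharply. The first claim needs no such sharpness, so a crude uniform lower bound $f_W\geq\phi(1/2)-o(1)$ on $[-1/2,1/2]$ already suffices.
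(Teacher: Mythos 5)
Your argument is correct, but it takes a genuinely different route from the paper. The paper never controls the density of $W=(\|\vec x\|_2^2-\mu_D)/\sigma_D$; it only has the Berry--Esseen bound on the distribution function, whose error $O(d^{-1/2})$ would swamp a target interval of width $2\delta$ (this is exactly the ``cannot apply the CLT to an interval that is shrinking as rapidly as $\pm\delta$'' obstacle). The paper circumvents this by applying Berry--Esseen to the slowly shrinking window $\pm 1/\log d$ and then running an averaging (pigeonhole-by-convolution) argument over shifts, which produces \emph{some} good $z$; the free choice of $z$ in the definition of $\annuli$ is what makes this legitimate, and the hypothesis $\delta\log d\to 0$ is exactly what makes the averaging ratio $(1+\delta\log d)^{-1}$ tend to $1$. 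You instead invoke a local limit theorem for densities, which lets you fix explicit shifts ($z=1/2-\delta$, resp.\ $z=0$) and integrate a uniform pointwise bound $f_W\geq\phi-o(1)$ over a set of measure $2|A|\delta$, however small; your placement of all centers inside $[-1/2,1/2]$ and the disjointness from $2\delta\le 1/n$ are both fine, and $2\phi(1/2)>2/5$, $2\phi(0)=\sqrt{2/\pi}$ give the stated constants. The one step you should actually carry out is the hypothesis of Gnedenko's theorem: the density of $X_i^2$ is $t^{-1/2}/(2c)$ on $(0,c^2)$ with $c=2^{-D-1}$, and a one-line computation shows the two-fold convolution is bounded by $\pi/(4c^2)$, so the density of the standardized sum is bounded for $n_0=2$ and the uniform convergence $f_W\to\phi$ indeed holds. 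With that supplied, your proof is complete; as a bonus it shows the second claim only needs $\delta\to 0$ (with $d\to\infty$) rather than the full strength of $\delta\log d\to 0$, whereas the paper's argument uses that hypothesis essentially. What the paper's route buys in exchange is that it relies only on the quantitative CDF-level CLT rather than a density-level limit theorem.
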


\begin{proof}
A uniformly chosen element $\vec x=\langle X_1,\dots,X_d\rangle$ of $\BOX_D$ has the $X_i$ independent and each uniformly distributed in $(-2^{-D-1},2^{-D-1})$. Thus $\|\vec x \|_2^2$ is the sum of $d$ iidrvs and has mean $\mu_D:=2^{-2D} d/12$ and variance $\sigma_D^2:= 2^{-4D} d/180$. By the central limit theorem (CLT), the random variable
    \[\frac{\|\vec x \|_2^2 - \mu_D}{\sigma_D}\]
has a normal distribution, as $d\to\infty$, with mean 0 and variance 1. We would like to argue that
    \begin{multline*}
    \vol \annuli(\{1\},1,D,\delta)
        \geq 2^{-dD}\left(\int_{-\delta}^{\delta} \frac{e^{-x^2/2}}{\sqrt{2\pi}}dx \right)
        \geq 2^{-dD}  \left( 2\delta \frac{e^{-\delta^2/2}}{\sqrt{2\pi}}\right)\\
        = 2^{-dD}\delta \left(\sqrt{\frac 2\pi}-o(1)\right),
    \end{multline*}
but we cannot apply the CLT to an interval that is shrinking as rapidly as $\pm \delta$. We get around this by applying the CLT to an interval that shrinks very slowly, and then using an analytic form of the pigeonhole principle to guarantee an appropriately short subinterval with the needed density.

We could accomplish this using only the classical CLT, but it is expeditious to use the quantitative CLT known as the Berry-Esseen theorem~\cite{Feller}*{Section XVI.5}, which is applicable since
    \[\rho_D:=\Expect{}{|X_i^2-2^{-2D}/12|^3}=2^{-6D}(3+2\sqrt3)/11340<\infty.\]
Let $I$ be an interval whose endpoints depend on $d$. The Berry-Esseen theorem implies that
    \[\Prob{}{\frac{\|\vec x \|_2^2 - \mu}{\sigma_D}\in I} \geq \frac{1}{\sqrt{2\pi}}\int_I \exp(-x^2/2)\,dx - 2\,\frac{\rho_D}{(\sigma_D/\sqrt{d})^3\sqrt{d}}.\]

First we handle the case $A=\{1\},n=1$. We have
    \begin{align*}
        \Prob{}{\frac{\|\vec x \|_2^2 - \mu_D}{\sigma_D}\in \pm \frac{1}{\log d}}
            &\geq  \frac{1}{\sqrt{2\pi}}\int_{-1/\log d}^{1/\log d} \exp(-x^2/2)\,dx - 2\,\frac{\rho_D}{(\sigma_D/\sqrt{d})^3\sqrt{d}} \\
            &\geq \frac{1}{\sqrt{2\pi}}\, \frac{2}{\log d} \exp\!\big(-(1/\log d)^2/2\big) - \frac{3}{\sqrt{d}}\\
            &\geq \frac{\sqrt{2/\pi}}{\log d} \left( 1 - \tfrac{1}{2}(\log d)^{-4}-3(\log d)d^{-1/2}\right)\\
            &\geq \frac{\sqrt{2/\pi}}{\log d} \left( 1 - (\log d)^{-4}\right).
    \end{align*}
Let $f$ be the density function of ${\frac{\|\vec x \|_2^2 - \mu_D}{\sigma_D}}$, and let $\chi_I$ be the indicator function of $I$. Since the convolution
    \[ (f \chi_{\pm 1/\log d})\ast \chi_{\pm \delta}\]
is supported on $\pm(1/\log d + \delta)$ and has 1-norm
    \[
    \| f \chi_{\pm 1/\log d})\|_1 \, \| \chi_{\pm \delta}\|_1 \geq 
    \bigg( \frac{\sqrt{2/\pi}}{\log d}\big(1-(\log d)^{-4}\big)\bigg)\,2\delta,\]
there must be some $z$ with
    \begin{align*}
    \big((f \chi_{\pm 1/\log d})\ast \chi_{\pm \delta}\big)(z)
        &\geq \frac{\bigg( \frac{\sqrt{2/\pi}}{\log d}(1-(\log d)^{-4})\bigg)\,2\delta}{2/\log d+2\delta}\\
        &= \delta \sqrt{\frac{2}{\pi}} \left( \frac{1-(\log d)^{-4}}{1+\delta \log d} \right)\\
        &= \bigg(\sqrt{\frac{2}{\pi}}-o(1)\bigg) \, \delta.
    \end{align*}
Consequently, $\vol \annuli(\{1\},1,D,\delta) \geq \left(\sqrt{\frac 2\pi}-o(1)\right)\,2^{-dD}\,\delta$.

Similar calisthenics make the following heuristic argument rigorous. Let $G$ be a normal rv with mean 0 and variance 1:
    \begin{align*}
        \vol(\annuli(A,n,D,\delta)) &\to 2^{-dD} \, \Prob{\vt,\va}{G\in \bigcup_{a\in A} \bigg(-\frac{a-1}{n} \pm \delta\bigg)} \\
            &\geq 2^{-dD}  \, \Prob{\vt,\va}{G \in \big(-1,-1+2\delta |A|\big)} \\
            &= 2^{-dD} \frac{1}{\sqrt{2\pi}} \int_{-1}^{-1+2\delta|A|} \exp(-x^2/2)\,dx \\
            &\geq 2^{-dD} \frac{1}{\sqrt{2\pi}} \exp(-1/2) 2\delta |A| \\
            &> \frac 25 \, 2^{-dD}|A| \delta,
    \end{align*}
where we have used $2\delta\leq 1/n$ to force the intervals $-(a-1)/n \pm \delta$ to be disjoint, and also to force $-1+2\delta |A|<0$. Since the final inequality is strict, we can replace the limit in the central limit theorem with a ``sufficiently large $d$'' hypothesis.
\end{proof}

Lemma~\ref{lem:fewtypes} is not best possible. However, the factor $2^{D+1}$ will turn out to be irrelevant in the final analysis.
\begin{lemma}[There are not many types of progressions]\label{lem:fewtypes}
Assume $k\geq D$. There are fewer than $2^{D+1} N^2$ types of $k$-term progressions with degree at most $D$ contained in $[N]$.
\end{lemma}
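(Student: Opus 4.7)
The plan is straightforward parameter counting. First, I observe that every $k$-term progression with degree at most $D$ has a unique type: if $(a_i)$ arises from a polynomial of degree exactly $n \leq D$, then the only triple $(m,a,b)$ making $(a_i)$ a type-$(m,a,b)$ progression is $m=n$, $a=a_1$, $b = \Delta^n(a_i)$, since $\Delta^m(a_i)$ is nonconstant for $m<n$ and identically zero for $m>n$. So it suffices to count the triples $(n,a,b)$ with $1 \leq n \leq D$ that are realized by some $k$-term progression in $[N]$.

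Next I bound each coordinate. The hypothesis $(a_i) \subseteq [N]$ gives $a = a_1 \in \{1,\dots,\lfloor N\rfloor\}$, contributing at most $N$ choices. For $b$, I invoke the inequality already established in the paper,
\[|\Delta^n(a_i)| \leq 2^{n-1}\left(\max_i a_i - \min_i a_i\right) \leq 2^{n-1}(N-1).\]
Since the values $a_i$ are integers, $b$ is a nonzero integer in the interval $[-2^{n-1}(N-1),\,2^{n-1}(N-1)]$, yielding fewer than $2^n N$ choices.

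Summing over the $D$ possible values of $n$,
\[\sum_{n=1}^{D} N \cdot 2^n N \;=\; N^2 (2^{D+1}-2) \;<\; 2^{D+1} N^2,\]
which is the claimed bound. There is no real obstacle here; the only point worth double-checking is the uniqueness of the type assignment and the fact that the hypothesis $k \geq D$ guarantees the $n$-th difference sequence is nonempty for every $n \leq D$, so the type-$(n,a,b)$ language is well-defined for every progression we count.
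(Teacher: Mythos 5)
Your proof is correct and follows essentially the same route as the paper: at most $N$ choices for the first term, fewer than $2^{n}N$ choices for the nonzero integer constant $b$ via the bound $|\Delta^n(a_i)|\leq 2^{n-1}(\max_i a_i-\min_i a_i)$, and a geometric sum over the degree. The uniqueness-of-type remark is harmless but unnecessary, since the lemma counts realizable triples $(n,a,b)$ rather than progressions.
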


\begin{proof}
We suppose that we have a $k$-term progression $a_1,\dots,a_k$ contained in $[N]$ of type $(D',a,b)$, and find restrictions on $D', a$ and $b$. First, fix $D'$. There are clearly at most $N$ possibilities for $a$. It is straightforward to prove by induction that for $\ell\in\{1,\dots,D'\}$
    \[-2^{\ell-1} N < \Delta^\ell (a_i)(v) < 2^{\ell-1} N.\]
Since $\Delta^{D'}(a_i)$ must be a nonzero constant sequence of integers, there are fewer than $2^{D'}N$ possibilities for the constant sequence $(b)=\Delta^{D'}(a_i)$. Summing this total over $1\leq D' \leq D$ yields the claim.
\end{proof}

\section{A base case and an inductive step}

\begin{proposition}[Base Case]\label{prop:base case} If $k>2D$, then as $N\to\infty$
    \begin{equation}\label{equ:allbase}
    \frac{r_{k,D}(N)}{N} \geq \left(\frac{\sqrt {90}}{e \pi^{3/2}}\,\frac{2^D}{D^{1/4}} \binom{2D}{D}-o(1)\right) \,\frac{\sqrt[4]{2\log N}}{2^{\sqrt{8D\log N}}}.
    \end{equation}
\end{proposition}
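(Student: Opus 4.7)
The plan is to emulate the Green--Wolf random-translate strategy in the framework set up by Lemma~\ref{lem:annuli}. Pick a real $\delta>0$ and an integer $d$ (both to be optimized later), set $S := \annuli(\{1\},1,D,\delta) \subseteq \BOX_D$, draw $\vt,\va$ uniformly and independently from $\BOX_0$, and define
\[
\vec y_n := n\vt + \va \bmod \vec 1, \qquad A := \{n \in [N] : \vec y_n \in S\}.
\]
Since each $\vec y_n$ is uniform on $\BOX_0$, we have $\mathbb{E}|A| = N\vol(S) \geq (\sqrt{2/\pi}-o(1))\,N\cdot 2^{-dD}\delta$ by Lemma~\ref{lem:annuli}, provided the side condition $\delta\log d\to 0$ holds.

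Next I would control the $D$-progressions inside $A$. If $p(1),\dots,p(k)\in A$ with $p$ of degree $D'\in\{1,\dots,D\}$, then setting $\vec x_j := p(j)\vt + \va \bmod \vec 1 \in S \subseteq \BOX_D\subseteq\BOX_{D'}$, Lemma~\ref{lem:RL2} (applicable since $k\geq 2D+1\geq D'+2$) produces a polynomial $\vec P$ of degree $D'$ with $\vec P(j)=\vec x_j$ and $D'!\vec P_{D'}\equiv \vt\, D'!p_{D'}\pmod{\vec 1}$. The defining inclusion of $S$ forces $\|\vec x_j\|_2^2\in(\mu_D+z\sigma_D)\pm\delta\sigma_D$, and Lemma~\ref{lem:RL1} then yields
\[
\|D'!\vec P_{D'}\|_2 \;\leq\; R_{D'}\;:=\;2^{D'}\sqrt{\delta\sigma_D/\tbinom{2D'}{D'}}.
\]
With $b := D'!p_{D'}$, a nonzero integer satisfying $|b|<D'!\cdot 2^{D'-1}N$, this forces $\vt b \bmod \vec 1$ into the Euclidean ball of radius $R_{D'}$ centered at $\vec 0$.

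I would then count ``bad types.'' For each type $(D',a,b)$ of $k$-term $D$-progression in $[N]$, the event that some progression of this type lies in $A$ implies both $\vt b\bmod\vec 1\in\ball(R_{D'})$ and $a\in A$; since $\vt$ and $\va$ are independent, this has probability at most $\vol(\ball(R_{D'}))\cdot\vol(S)$. Deleting $a$ from $A$ destroys every progression of this type, so summing over the fewer than $2^{D+1}N^2$ types supplied by Lemma~\ref{lem:fewtypes} bounds the expected number of deleted elements by $O(2^D N^2)\cdot\vol(\ball(R_D))\cdot\vol(S)$; the $D'=D$ term dominates because $R_{D'}$ is increasing in $D'$ (the $D'<D$ contributions can be further sharpened, if needed, using the additional constraint $D'!\vec P_{D'}\in\BOX_{D-D'}$).

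Finally, I would optimize. Requiring the expected deletions to be a vanishing fraction of $\mathbb{E}|A|$ reduces to $\vol(\ball(R_D))\lesssim 1/(2^D N)$; expanding via $\vol(\ball(R))=\pi^{d/2}R^d/\Gamma(d/2+1)$ and Stirling gives $\delta\lesssim \tbinom{2D}{D}\sqrt{180d}/(2e\pi)\cdot N^{-2/d}$. Plugging this back into $\mathbb{E}|A|/N\geq(\sqrt{2/\pi}-o(1))2^{-dD}\delta$ and choosing $d=\sqrt{2\log N/D}$ to minimize the combined exponent $dD+2\log N/d = \sqrt{8D\log N}$ produces the exponential factor $2^{-\sqrt{8D\log N}}$, while the $\sqrt d$ contributes the $\sqrt[4]{2\log N}/D^{1/4}$ prefactor. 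The main technical obstacles are verifying the side condition $\delta\log d\to 0$ for the optimal $\delta$, handling the $D'<D$ contributions to $\mathbb{E}[\text{deletions}]$ rigorously, and carefully tracking the $o(1)$ losses from Stirling, from Lemma~\ref{lem:annuli}, and from the slack in the constraint $\vol(\ball(R_D))\leq o(1)/(2^D N)$, in order to arrive at the stated constant $\sqrt{90}/(e\pi^{3/2})\cdot 2^D\tbinom{2D}{D}/D^{1/4}$.
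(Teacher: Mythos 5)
Your proposal is correct and follows essentially the same route as the paper's proof: the paper's base case is exactly this random $\vt,\va$ construction with the single annulus $\annuli(\{1\},1,D,\delta)$, one deletion per type counted by Lemma~\ref{lem:fewtypes}, Lemmas~\ref{lem:RL2} and~\ref{lem:RL1} forcing $b\,\vt\bmod\vec1$ into a ball of radius $2^{D}\sqrt{\delta\sigma_D/\binom{2D}{D}}$, and the same choices $\delta\asymp\binom{2D}{D}\sqrt{d}\,N^{-2/d}$ and $d=\bigl\lfloor\sqrt{2\log N/D}\bigr\rfloor$. One caveat: the constant you promise to ``arrive at'' is not what this computation yields --- carrying out the bookkeeping (as the paper itself does) gives $\frac{\sqrt{90}}{e\pi^{3/2}}\binom{2D}{D}D^{-1/4}$ with no factor $2^{D}$ (consistent with the $\sqrt{360}$ of Corollary~\ref{cor} at $D=1$), so the $2^{D}$ in the stated proposition appears to be an artifact of the statement rather than something your optimization will recover.
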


\begin{proposition}[Inductive Step] \label{prop:inductive step} If $k> 2D$, then there exists a positive constant $C$
    \[\frac{r_{k,D} (N)}{N}\geq C \,2^{-dD} \, \frac{r_{k,2D}(N_0)}{N_0},\]
where
    \[N_0 :=  \frac{e \pi}{3\sqrt 5} \left(4^D \binom{2D}{D}\right)^{-1} \frac{N^{2/d}}{d^{1/2}}.\]
\end{proposition}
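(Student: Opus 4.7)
I would proceed in three stages: random construction, bound on bad progressions, and parameter calibration.

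\textbf{Stage 1: Random construction.} Set $n := \Floor{N_0}$, choose $A \subseteq [n]$ with $|A| = r_{k,2D}(n)$ avoiding $k$-term $2D$-progressions, fix a thickness $\delta < 2^{-2D-1}/n$ (to be pinned down), and form $S := \annuli(A,n,D,\delta)$. Pick $\vt, \va$ uniformly and independently from $\BOX_0$, define the random map $\varphi(m) := m\vt + \va \bmod \vec 1$, and let $B_0 := \{m \in [N] : \varphi(m) \in S\}$. Lemma~\ref{lem:annuli} immediately gives $\Expect{\vt,\va}{|B_0|} = N \vol(S) \geq \tfrac{2}{5} N \cdot 2^{-dD} \cdot r_{k,2D}(n) \cdot \delta$.

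\textbf{Stage 2: No bad progressions for good $\vt$.} Suppose $(a_1,\ldots,a_k) \subseteq B_0$ is a $k$-term $D$-progression of exact degree $D' \leq D$, arising from a polynomial $p$ with $b := D'!\,p_{D'}$ a nonzero integer. Lemma~\ref{lem:RL2} (applied with $D'$ in place of $D$) provides a degree-$D'$ polynomial $\vec P(j)$ with $\vec P(j) = \varphi(a_j) \in \BOX_D \subseteq \BOX_{D'}$ and $D'! \vec P_{D'} \equiv \vt b \pmod{\vec 1}$. Each $\vec P(j)$ lies on some shell labeled by $s(j) \in A$, and reading off the annular condition shows the integer sequence $(s(j))_{j=1}^k$ is within $n\delta$ of an explicit polynomial in $j$ of degree $\leq 2D'$. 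Its $(2D+1)$-th differences therefore have absolute value at most $2^{2D+1} n\delta < 1$ and must vanish; so $(s(j))$ is itself polynomial of degree $\leq 2D$ with values in $A$, and $A$'s avoidance of $k$-term $2D$-progressions forces $(s(j))$ to be constant. Then all $\|\vec P(j)\|_2^2$ lie within $2\sigma_D\delta$ of each other, so Lemma~\ref{lem:RL1} (using $k \geq 2D'+1$) delivers
\[
\bigl\|\vt b \bmod \vec 1\bigr\|_2 \;=\; \|D'!\vec P_{D'}\|_2 \;\leq\; \rho, \qquad \rho := 2^{D}\sqrt{2\sigma_D\delta/\tbinom{2D}{D}}.
\]
Since $b$ is a nonzero integer and $\vt$ is uniform on $\BOX_0$, the vector $\vt b \bmod \vec 1$ is uniform on $\BOX_0$; only at most $2^{D+1} N$ nonzero integer values of $b$ can arise (by the $|\Delta^{D'}(a_i)| \leq 2^{D'-1} N$ bound with $D' \leq D$), so a union bound places the total measure of ``bad'' $\vt$ at most $2^{D+1} N \vol(\ball_\rho)$. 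Choosing $\delta$ so that this is less than $1$ produces a good $\vt$ for which $B_0$ is $k$-term $D$-progression-free regardless of $\va$; averaging over $\va$ then yields a $\va$ with $|B_0| \geq N\vol(S)$, and we take $B := B_0$.

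\textbf{Stage 3: Parameter calibration.} Estimating $\vol(\ball_\rho) = \pi^{d/2}\rho^d/\Gamma(d/2+1)$ via Stirling's bound $\Gamma(d/2+1) \geq \sqrt{\pi d}(d/(2e))^{d/2}$, together with $\sigma_D = 2^{-2D}\sqrt{d/180}$, the bad-measure inequality reduces to $\delta \lesssim \tfrac{\binom{2D}{D}\sqrt{180}}{4\pi e}\sqrt d\,N^{-2/d}$. Balancing this against the ``vanishing-difference'' constraint $\delta = 1/(2^{2D+1} n)$ precisely pins down
\[
n \;=\; \frac{\pi e}{3\sqrt 5}\bigl(4^D \tbinom{2D}{D}\bigr)^{-1} \frac{N^{2/d}}{\sqrt d} \;=\; N_0,
\]
and substituting back gives $r_{k,D}(N)/N \geq \vol(S) \geq C \cdot 2^{-dD} \cdot r_{k,2D}(N_0)/N_0$, as required. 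The main conceptual obstacle is the constant-shell reduction in Stage 2 — the $(2D+1)$-th difference argument that forces $(s(j))$ into a polynomial sequence in $A$ and hence makes it constant — since this is the $D$-progression generalization of the Elkin/Green-Wolf trick for 3-APs and the step that lets the induction see $A$'s $2D$-progression-freeness rather than merely its sphere-freeness.
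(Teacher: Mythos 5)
Your proposal is correct, and it rests on the same construction (shells $\annuli(A_0,N_0,D,\delta)$ pulled back through $m\mapsto m\vt+\va \bmod \vec 1$) and the same key lemmas, but the probabilistic endgame differs from the paper's. The paper runs a first-moment deletion argument: it sums $\Prob{\vt,\va}{E(D',a,b)=1}$ over all $2^{D+1}N^2$ types $(D',a,b)$ of Lemma~\ref{lem:fewtypes}, using the decoupling of the $\va$-event ($a\vt+\va\in\annuli$) from the $\vt$-event ($\|b\,\vt\bmod\vec 1\|_2\le\sqrt{F\sigma_D\delta}$), deletes one element per type, and retains the explicit factor $1-2^{D+1}N\vol\ball(\cdot)=d/(d+2)$. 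You instead union-bound only over the at most $2^{D+1}N$ admissible leading differences $b$ to exhibit a single good $\vt$ with $\|b\,\vt\bmod\vec 1\|_2>\rho$ for every such $b$; since your Stage-2 analysis (which is exactly the paper's constant-versus-nonconstant shell dichotomy via Lemmas~\ref{lem:RL2} and~\ref{lem:RL1}, together with the $2^{2D+1}N_0\delta\le 1$ difference argument) shows that any $k$-term $D$-progression in $B_0$ would force $\|b\,\vt\bmod\vec 1\|_2\le\rho$ independently of $\va$, such a $\vt$ makes $B_0$ progression-free for every $\va$, and you then choose $\va$ by averaging. This buys a cleaner conclusion --- no deletion and no $N^2$ type count, only the count of possible $b$ values --- at no asymptotic cost, and your calibration does reproduce the stated $N_0$: the extra $\sqrt2$ in your $\rho=\sqrt{2F\sigma_D\delta}$ (you used the full interval length rather than its half-width in Lemma~\ref{lem:RL1}) is absorbed by the Stirling factor $\sqrt{\pi d}$ in $\Gamma(d/2+1)$, leaving bad measure at most about $2^{D+1}/\sqrt{\pi d}<1$ for large $d$. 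The paper's bookkeeping, by contrast, is what carries explicit constants into the base case and the refined $r_3$ bound; for the Inductive Step as stated, with an unspecified constant $C$, the two routes are equally good, and minor looseness in your write-up (equality versus strict inequality in $\delta=2^{-2D-1}/n$, replacing $N_0$ by $\lfloor N_0\rfloor$) only perturbs the constant.
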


Let $A_0$ be a subset of $[N_0]$ with cardinality $r_{k,2D}(N_0)$ that does not contain any $k$-term $2D$-progression, assume $2\delta N_0\leq 2^{-2D}$,
and let
    \[
    A:=A(\vt,\va) = \{n \in [N] \colon n \,\vt + \va \bmod\vec1 \in \annuli(A_0,N_0,D,\delta)\},
    \]
which we will show is typically (with respect to $\vt,\va$ being chosen uniformly from $\BOX_0$) a set with many elements and few types of $D$-progressions. After removing one element from $A$ for each type of progression it contains, we will be left with a set that has large size and no $k$-term $D$-progressions. Since $\BOX_0\times \BOX_0$ has Lebesgue measure 1, this argument could be easily recast in terms of Lebesgue integrals, but we prefer the probabilistic notation and language.

Define $T:=T(\vt,\va)$ to be the set
\begin{equation*}
    \left\{a\in [N] \colon\;
    \begin{matrix}
        \text{$\exists b\in \RR, D'\in[D]$ such that $A(\vt,\va)$ contains} \\
        \text{a $k$-term progression of type $(D',a,b)$}
    \end{matrix}
        \right\},
\label{T.definition2}
\end{equation*}
which is contained in $A(\vt,\va)$. Observe that $A \setminus T$ is a subset of $[N]$ and contains no $k$-term $D$-progressions, and consequently
    \(r_{k,D}(N)\geq |A \setminus T|= |A| - |T|\)
for every $\vt,\va$. In particular,
    \begin{equation}\label{equ:rBound2}
    r_{k,D}(N) \geq \Expect{\vt,\va}{|A| - |T|} = \Expect{\vt,\va}{|A|} - \Expect{\vt,\va}{|T|}.
    \end{equation}

First, we note that
    \begin{equation*}
    \Expect{\vt,\va}{|A|} = \sum_{n=1}^N \Prob{\vt,\va}{n\in A} = \sum_{n=1}^N \Prob{\va}{n\in A}  = N \vol(\annuli(A_0,N_0,D,\delta)).
    \end{equation*}

Let $E{(D',a,b)}$ be 1 if $A$ contains a $k$-term progression of type $(D',a,b)$, and \mbox{$E{(D',a,b)}=0$} otherwise. We have
    \[|T| \leq \sum_{(D',a,b)} E{(D',a,b)},\]
where the sum extends over all types $(D',a,b)$ for which $D'\in[D]$ and there is a $D'$-progression of that type contained in $[N]$; by Lemma~\ref{lem:fewtypes} there are fewer than $2^{D+1} N^2$ such types.

Suppose that $A$ has a $k$-term progression of type $(D',a,b)$, with $D'\in[D]$. Let $p$ be a degree $D'$ polynomial with lead term $p_{D'}\not=0$, $p(1),\dots,p(k)$ a $D'$-progression contained in $A$, and $\Delta^{D'}(p(i))=(b)$. Then
    \[\vec x_i:=p(i)\, \vt +\va \bmod\vec1 \in \annuli(A_0,N_0,D,\delta)\subseteq \BOX_D.\]
By Lemma~\ref{lem:RL2}, the $\vec x_i$ are a $D'$-progression in $\RR^d$, say $\vec P(j)=\sum_{i=0}^{D'} \vec P_i j^i$ has $\vec P(j)=\vec x_j$ and $D'! \vec P_{D'}=D'! p_{D'} \,\vt \bmod\vec 1= b\, \vt \bmod \vec1$. By elementary algebra
    \[Q(j):=\frac{ \|\vec P(j)\|_2^2 - \mu_D}{\sigma_D} - z\]
is a degree $2D'$ polynomial in $j$, and since $\vec P(j)=\vec x_j \in \annuli(A_0,N_0,D,\delta)$ for $j\in [k]$, we know that
    \[Q(j) \in \bigcup_{a\in A_0} \left(-\frac{a-1}{N_0}\pm \delta\right)\]
for all $j\in [k]$, and also $Q(1),\dots,Q(k)$ is a $2D'$-progression. Define the real numbers $a_j \in A_0$, $\epsilon_j \in \pm \delta$ by
    \[Q(j) = - \frac{a_j-1}{N_0} + \epsilon_j.\]

We need to handle two cases separately: either the sequence $(a_i)$ is constant or it is not. Suppose first that it is not constant. Since $a_i\in A_0$, a set without $k$-term $2D$-progressions, we know that $\Delta^{2D+1}(a_i)\not=(0)$, and since $(a_i)$ is a sequence of integers, for some $v$
    \[|\Delta^{2D+1}(a_i)(v)|\geq 1.\]
Consider:
    \[(0)=\Delta^{2D+1}(Q(i)) ={ \frac{1}{N_0} \Delta^{2D+1}(a_i) + \Delta^{2D+1}(\epsilon_i)}, \]
whence
    \[ |\Delta^{2D+1}(\epsilon_i)(v)| = \frac{1}{N_0} |\Delta^{2D+1}(a_i)(v)| \geq \frac{1}{N_0}.\]
Since $|\epsilon_i|<\delta$, we find that $|\Delta^{2D+1}(\epsilon_i)(v)| < 2^{2D+1}\delta$, and since we assumed that $2\delta N_0\leq 2^{-2D}$, we arrive at the impossibility
    \[ \frac{1}{N_0} \leq |\Delta^{2D+1}(\epsilon_i)(v)| < 2^{2D+1} \delta \leq  2^{2D}\,\cdot\,\frac{2^{-2D}}{N_0} = \frac{1}{N_0}.\]

Now assume that $(a_i)$ is a constant sequence, say $a:=a_i$, so that
    \[Q(j) \in -\frac{a-1}{N_0} \pm \delta \]
for all $j\in[k]$. This translates to
    \[\|\vec P(j)\|_2^2 \in  \mu_D-(z-\frac{a-1}{N_0})\sigma_D \pm \delta\sigma_D.\]
Using Lemma~\ref{lem:RL1}, the lead coefficient $\vec P_{D'}$ of $\vec P(j)$ satisfies
    \begin{multline*}
    \|D'! \vec P_{D'}\|_2 \leq D'!\,2^{D'} {(2D')!}^{-1/2} \sqrt{\delta \sigma_D} \leq
    D!\,2^{D} {(2D)!}^{-1/2} \sqrt{\delta \sigma_D} \\
    = \left(\frac{4^D \, D!^2}{(2D)!}\right)^{1/2} \sqrt{ \sigma_D \delta }
        = \sqrt{ F \sigma_D \delta },
    \end{multline*}
where $F:=4^D/\binom{2D}{D}$. We have deduced that $E{(D',a,b)}=1$ only if
    \[
    a\,\vt+\va \bmod 1 \in \annuli(A_0,N_0,D,\delta) \quad \text{and} \quad \|b\, \vt  \bmod 1\|_2 \leq \sqrt{ F \sigma_D \delta }.
    \]
Since $\va$ is chosen uniformly from $\BOX_0$, we notice that
    \[
    \Prob{\va}{a\,\vt+\va\bmod 1 \in \annuli(A_0,N_0,D,\delta)}=\vol \annuli(A_0,N_0,D,\delta),
    \]
independent of $\vt$. Also, we notice that the event $\{\|b\, \vt \bmod 1\|_2 \leq \sqrt{ F \sigma_D \delta }\}$ is independent of $\va$, and that since $b$ is an integer, $\vt\bmod\vec 1$ and $b\,\vt\bmod \vec 1$ are identically distributed. Therefore, the event $\{\|b\, \vt \bmod 1\|_2 \leq \sqrt{ F \sigma_D \delta }\}$ has probability at most\footnote{In fact, since we will shortly choose $\delta$ so that $F\sigma_D\delta \to 0$, this upper bound cannot be improved.}
    \[\vol \ball(\sqrt{ F \sigma_D \delta }) = \frac{2\pi^{d/2} (\sqrt{ F \sigma_D \delta })^{d}}{\Gamma(d/2) d},\]
where $\ball(x)$ is the $d$-dimensional ball in $\RR^d$ with radius $x$. It follows that
    \[\Prob{\vt,\va}{E{(D',a,b)}=1}\leq \vol \annuli(A_0,N_0,D,\delta) \cdot\vol \ball(\sqrt{ F \sigma_D \delta }),\]
and so
    \[\Expect{\vt,\va}{|T|} \leq 2^{D+1} N^2 \vol \annuli(A_0,N_0,D,\delta)  \cdot \vol \ball(\sqrt{ F \sigma_D \delta }).\]

Equation~\eqref{equ:rBound2} now gives us
    \[\frac{r_{k,D}(N)}{N} \geq \vol(\annuli(A_0,N_0,D,\delta)) \left( 1- 2^{D+1} N  \vol \ball(\sqrt{ F \sigma_D \delta })\right).\]
Setting
    \[
    \delta := \frac{1}{\pi F} \,\left(\frac{d}{(d+2)2^{D+1}}\right)^{2/d}\, \frac{\Gamma(d/2)^{2/d}}{N^{2/d} \sigma_D}\sim \frac{3 \sqrt{5}}{e \pi} \binom{2D}{D} \frac{d^{1/2}}{N^{2/d}},
    \]
we observe that
    \[
    1- 2^{D+1} N \frac{2\pi^{d/2} (F \delta^{1/2} d^{1/4})^{d}}{\Gamma(d/2) d}= \frac {d}{d+2}.
    \]

\subsection{Finish proof of Proposition~\ref{prop:base case}}
We set
         \[d:=\left\lfloor\sqrt{\frac{2 \log N}D}\right\rfloor,\]
so that $\delta\log d\to0$, and
    \begin{align*}
    \frac{r_{k,D}(N)}{N}
        & \geq \frac{d}{d+2}\,  \vol \annuli(\{1\},1,D,\delta)\\
        &\geq \frac{d}{d+2}  \,\left(\sqrt{\frac 2\pi}-o(1)\right) \,  2^{- dD} \,\delta \\
        &\geq \frac{d}{d+2} \,\left(\sqrt{\frac 2\pi}-o(1)\right) \,  2^{- dD} \,\frac{1}{\pi F} \,\left(\frac{d}{(d+2)2^{D+1}}\right)^{2/d}\, \frac{\Gamma(d/2)^{2/d}}{N^{2/d} \sigma_D} \\
        &= \left(\frac{\sqrt 2}{\pi^{3/2} F}-o(1)\right)\,  2^{- dD} \, \frac{\Gamma(d/2)^{2/d}}{N^{2/d} \sigma_D} \\
        &= \left(\frac{\sqrt 2}{\pi^{3/2} F}-o(1)\right)\,  2^{- dD} \, \frac{(1+o(1))d/2e}{N^{2/d} 2^{-2D}\sqrt{d/180}}\\
        &\geq \left(\frac{2^{2D}\,\sqrt {360}}{2e \pi^{3/2} F}-o(1)\right) \,  2^{- dD}\, \frac{\sqrt d}{N^{2/d}}\\
        &= \left(\frac{\sqrt {90}}{e \pi^{3/2}}\, \binom{2D}{D}-o(1)\right) \,\sqrt{d}\, \exp\big(-(dD+\frac 2d \log N)\big).
    \end{align*}
Define the error term $\epsilon(N)$ by
    \[
    dD+\frac2d \log N
        = \sqrt{8D \log N}+\epsilon(N),
    \]
and observe that for any integer $\ell$, we have $\epsilon(x)$ monotone increasing on $[2^{\ell^2 D/2},2^{(\ell+1)^2D/2})$, while $N$ being in that interval gives $d=\ell$. By algebra $\epsilon(2^{\ell^2 D/2})=0$, and also
    \[\lim_{N \to \exp({(d+1)^2D/2})} \epsilon(N) = \frac Dd.\]
It follows that $\epsilon(N) \leq D/(\sqrt{2(\log N)/D}-1)$.

From this, we see that
    \begin{align*}
    \exp\big(-(dD+\frac 2d \log N)\big)
        &\geq \exp(-\sqrt{8D\log N}) \exp\left(\frac{D}{\sqrt{2(\log N)/D}-1}\right)\\
        &=(1+o(1))\exp(-\sqrt{8D\log N}),
    \end{align*}
which completes the proof of Proposition~\ref{prop:base case}.

\subsection{Finish proof of Proposition~\ref{prop:inductive step}}
We set
    \[
    N_0 := \frac{e \pi}{3\sqrt 5} \left(4^D \binom{2D}{D}\right)^{-1} \frac{N^{2/d}}{d^{1/2}}
    \]
which accomplishes $\frac 14 2^{-2D} \leq 2\delta N_0 \leq 2^{-2D}$.
With this $\delta, N_0$ and Lemma~\ref{lem:annuli} we have,
    \begin{align*}
    \frac{r_{k,D}(N)}{N}
        & \geq \frac {d}{d+2}  \vol \annuli(A_0,N_0,D,\delta)\\
        &\geq \frac {d}{d+2}  \,\frac 25 \,2^{-dD}\,|A_0|\,\delta \\
        &\geq \frac12\,\frac25\,2^{-dD} \frac{|A_0|}{N_0}\,\delta N_0 \\
        &= C \,2^{-dD} \, \frac{r_{k,2D}(N_0)}{N_0}.
    \end{align*}

\section{Proof of Theorem~\ref{thm:main}}

We proceed by induction, with the base case of $n=2$ following immediately from Proposition~\ref{prop:base case}. We now assume that Theorem~\ref{thm:main} holds for $n$, assume that $k>2^nD$, and show that
    \[\frac{r_{k,D}(N)}{N} \geq C \frac{(\log N)^{1/(2n+2)}}{\exp\big((n+1)2^{n/2}D^{n/(n+1)} \sqrt[n+1]{\log N}\big)}.\]

By Proposition~\ref{prop:inductive step}, we have
    \begin{equation*}
    \frac{r_{k,D}(N)}{N}
        \geq C \frac{1}{2^{dD}}\, \frac{r_{k,2D}(N_0)}{N_0},
    \end{equation*}
with $N_0=C N^{2/d}d^{-1/2}$. Since $k>2^n D = 2^{n-1} (2D)$, the inductive hypothesis gives us
    \begin{align*}
    \frac{r_{k,D}(N)}{N}
        &\geq C \frac{1}{2^{dD}} \frac{(\log N_0)^{1/(2n)}}{\exp\big(n 2^{(n-1)/2} (2D)^{(n-1)/n} \sqrt[n]{\log N_0}\big)}\\
        &= C \frac{(\log N_0)^{1/(2n)}}{\exp\big(dD+n 2^{(n-1)/2} (2D)^{(n-1)/n} \sqrt[n]{\log C-\frac12\log d+\frac 2d \log N}\big)}\\
        &\geq C \frac{(\log N_0)^{1/(2n)}}{\exp\big(dD+n 2^{(n-1)/2} (2D)^{(n-1)/n} \sqrt[n]{\frac 2d \log N}\big)},
    \end{align*}
with the final inequality coming from $d$ being sufficiently large.

Setting
    \[
    d:= \Floor{2^{n/2}   \left( \frac{\log N}{D}\right)^{1/(n+1)}}
    \]
we arrive at the error term and bound for it:
    \begin{equation*}
    dD+n2^{(n-1)/2} (2D)^{(n-1)/n} \sqrt[n]{\frac 2d \log N}
         = (n+1) 2^{n/2} D^{n/(n+1)} (\log N)^{1/(n+1)}  +\epsilon(N)
    \end{equation*}
where
    \[\epsilon(N)\leq (1+o(1)) \frac{(n+1)D^{(n+2)/(n+1)}}{n\,2^{n/2+1} \,(\log N)^{1/(n+1)}} \leq \frac{C}{(\log N)^{1/(n+1)}}.\]
Thus,
    \begin{multline*}
    \exp\!\bigg(-\big(dD+n2^{(n-1)/2} (2D)^{(n-1)/n} \sqrt[n]{\frac 2d \log N} \big) \bigg) \geq \\
        (1+o(1))\exp\!\left( -(n+1) 2^{n/2} D^{n/(n+1)} (\log N)^{1/(n+1)} \right).
    \end{multline*}

\section{Further Thoughts}
The approach here works {\it mutatis mutandis} for constructing a subset of an arbitrary set ${\cal N}$ of $N$ integers. The number of progressions in ${\cal N}$ becomes a critical parameter, and the inductive step is somewhat more technical. The specific changes are detailed in~\cite{ProgressionsInSubsets}.

Further, the methods here can serve as a basic outline for constructing thick subsets of a large arbitrary set that does not contain nontrivial solutions to a linear system of equations. This problem has seen recent progress due to Shapira~\cite{Shapira}, but a universal thick construction remains elusive.

\begin{bibdiv}
\begin{biblist}
\bib{Behrend}{article}{
   author={Behrend, F. A.},
   title={On sets of integers which contain no three terms in arithmetical
   progression},
   journal={Proc. Nat. Acad. Sci. U. S. A.},
   volume={32},
   date={1946},
   pages={331--332},
   review={\MR{0018694 (8,317d)}},
}
\bib{Bourgain}{article}{
    author={Bourgain, Jean},
    title={Roth's theorem on progressions revisited},
    journal={Journal d'Analyse Math\'{e}matique},
    volume={104},
    number={1},
    date={2008},
    pages={155--192},
    doi={10.1007/s11854-008-0020-x},
}
\bib{Elkin}{article}{
    author={Elkin, Michael},
    title={An improved construction of progression-free sets},
    date={January 28, 2008},
    pages={20},
    eprint={arXiv:0801.4310},
    note={Version 1},
}
\bib{Feller}{book}{
    author={Feller, William},
    title={An introduction to probability theory and its applications, volume II},
}
\bib{Gowers}{article}{
    author={Gowers, Timothy},
    title={A new proof of Szemer\'{e}di's theorem},
    journal={Geom. Funct. Anal.},
    volume={11},
    number={3},
    pages={465--588},
    date={2001},
    review={\MR{1844079 (2002k:11014)}},
    doi={10.1007/s00039-001-0332-9},
}
\bib{r4paper}{article}{
   author={Green, Ben},
   author={Tao, Terence},
   title={New bounds for Szemer\'edi's theorem. II. A new bound for $r_4(N)$},
   conference={
      title={Analytic number theory},
   },
   book={
      publisher={Cambridge Univ. Press},
      place={Cambridge},
   },
   date={2009},
   pages={180--204},
   review={\MR{2508645 (2010b:11016)}},
}
\bib{GreenWolf}{article}{
    author={Green, Ben},
    author={Wolf, Julia},
    title={A note on Elkin's improvement of Behrend's constructions},
    date={October 5, 2008},
    pages={4},
    eprint={arXiv:0810.0732},
    note={Version 1},
}
\bib{KalaiBlog}{article}{
      author={Kalai, Gil}
      author={Gowers, W.~T.},
       title={Pushing Behrend Around},
        date={2009},
     journal={Combinatorics and more, Weblog of Gil Kalai,},
        note={gilkalai.wordpress.com/2008/07/10/pushing-behrend-around},
}
\bib{LabaLacey}{article}{
    author={{\L}aba, Izabella},
    author={Lacey, Michael T.},
    title={On sets of integers not containing long arithmetic progressions},
    date={August 22, 2001},
    pages={8},
    eprint={arXiv:math.CO/0108.155},
    note={Version 1},
}
\bib{ProgressionsInSubsets}{article}{
    author={O'Bryant, Kevin},
    title={Thick subsets that do not contain arithmetic progressions},
    date={June 21, 2010},
    eprint={arxiv.org/abs/0912.1494},
}
\bib{Rankin}{article}{
   author={Rankin, R. A.},
   title={Sets of integers containing not more than a given number of terms
   in arithmetical progression},
   journal={Proc. Roy. Soc. Edinburgh Sect. A},
   volume={65},
   date={1960/1961},
   pages={332--344 (1960/61)},
   issn={0308-2105},
   review={\MR{0142526 (26 \#95)}},
}
\bib{Shapira}{article}{
    author={Shapira, Asaf},
    title={A proof of Green's conjecture regarding the removal properties of sets of linear equations},
    eprint={arxiv.org/abs/0807.4901},
    date={August 21, 2008},
}

\end{biblist}
\end{bibdiv}
\end{document}